\documentclass[a4paper, 10pt]{amsart}

\usepackage[latin1]{inputenc}
\usepackage[swedish, english]{babel}
\usepackage{epsfig}
\usepackage{float}
\usepackage{subfigure}
\usepackage{moreverb}

\usepackage{afterpage}		
\usepackage{latexsym}
\usepackage{amsmath}
\usepackage{amssymb}

\usepackage[numbers,sort&compress]{natbib}

\renewcommand{\appendix}{
\setcounter{section}{0}
\renewcommand{\thesection}{\Roman{section}}
\vspace{0.5cm}

{\Large{\bf APPENDIX}}}

\newtheorem{theorem}{Theorem}[section]

\newtheorem{proposition}[theorem]{Proposition}

\newtheorem{definition}[theorem]{Definition}

\newenvironment{remark}[1][Remark]{\begin{trivlist}
\item[\hskip \labelsep {\bfseries #1}]}{\end{trivlist}}

\newcommand{\Pp}{\mathbb{P}}

\newcommand{\OO}{\mathcal{O}}

\newcommand{\Ss}{\mathbb{S}}

\newcommand{\SAT}{\mathbb{SAT}}

\newcommand{\uu}{{\bf u}}
\newcommand{\g}{{\bf g}}

\newcommand{\vv}{{\bf v}}
\newcommand{\f}{{\bf f}}

\begin{document}

\title[Summation-by-Parts]{ Review of Summation-by-parts schemes for initial-boundary-value problems}\author{Magnus Sv\"ard  and Jan Nordstr{\"o}m }


\date{\today}
\maketitle
\begin{abstract}
High-order finite difference methods are efficient, easy to program, scales well in multiple dimensions and can be modified locally for various reasons (such as shock treatment for example). The main drawback have been the complicated and sometimes even mysterious stability treatment at boundaries and interfaces required for a stable scheme. The research on summation-by-parts operators and weak boundary conditions during the last 20 years have removed this drawback and now reached a mature state. It is now possible to construct stable and high order accurate multi-block finite difference schemes in a systematic  building-block-like manner. In this paper we will review this development, point out the main contributions and speculate about the next lines of research in this area.
\end{abstract}
{\bf Keywords:} well posed problems, energy estimates, finite difference; finite volume; boundary conditions; interface conditions; stability; high order of accuracy \\

\section{Introduction}\label{sec:intro}





The research on Summation-By-Parts (SBP) schemes was originally driven by applications in flow problems, including turbulence and wave propagation. The objective was to use highly accurate schemes to allow waves and other small features to travel long distances, or persist for long times. One of the ground-breaking papers showing the benefit of high-order finite-difference methods for  wave propagation problems is \cite{KreissOliger72}. However, it has until recently proven difficult to show the same benefit in realistic simulations. Although it is easy to derive high-order finite difference schemes in the interior of the domain it is non-trivial to find accurate and stable schemes close to boundaries. Furthermore, complicated geometries necessitates multi-block techniques. This poses yet another challenge for high-order finite difference schemes since solutions in different blocks must be glued together in a stable and accurate way. The stencils near boundaries and block interfaces create difficulties. We will focus on the so called Simultaneous-Approximation-Term (SAT) technique where the boundary and interface conditions are imposed weakly.

The fundamental idea of SBP-SAT schemes is to allow proofs of convergence for linear and linearized problems. Convergence proofs form the bedrock of numerical analysis of PDEs since they provide the mathematical foundation that gives credibility to a numerical simulation. Without a proof of convergence, there is no guarantee that the numerical solution has any value at all.  The confidence that a discrete solution is an approximation of the true matematical solution is crucial, not only in practical engineering simulations, but also for the possibility to evaluate the accuracy of the model (i.e. the governing equation) itself and propose improved models. Without this quality assurance it is impossible to distinguish between modeling errors and numerical errors.

The SBP finite difference operators were first derived in \cite{KreissScherer74,KreissScherer77} and approximate coefficients calculated. In \cite{Strand94}, the analysis was revisited and exact expressions for the finite difference coefficients were obtained. However, the SBP finite difference operators alone, only admits stability proofs for very simple problems, and the use was limited. This changed with \cite{CarpenterGottlieb94} when Simultaneous Approximation Terms were proposed to augment SBP schemes. These are penalty-like terms that enforces boundary conditions weakly. With both SBP operators and the SAT technique at hand, stability proofs for more complicated systems of partial differential equations (PDEs) were within reach. 

Finite difference methods are by no means the only choice of high-order schemes. There are numerous other high-order methods with different strengths and weaknesses. However, finite difference schemes are often favored in cases where curvilinear multi-block grids can be generated, due to simpler coding and more efficient use of computer resources. For aerodynamic applications where most of the surface of the aircraft is smooth, this methodology is especially suitable since \emph{i)} curvilinear grids can be generated and \emph{ii)}  the resolution of large normal-to-surface gradients force the use of structured grids anyway.  For very complicated geometries (such as close to landing gears), one can use hybrid methods (a combination of high-order finite differences and an unstructured method) as will be discussed below. Hybrid methods are also preferable in situations where waves propagate in free space after beeing generated by complicated geometrical features.


In this article, we will review the progress made towards towards stable high-order finite difference schemes for fluid dynamics as well as other applications. To this end, we will briefly explain the basic principles in a few examples. We will also discuss the SBP-SAT interpretation of other schemes and recent extensions of SBP-SAT schemes for time integration, non-linear theory and shock capturing.

The article is organized as follows. In Section \ref{sbp_theory}, we present the theory for linear initial-boundary-value problem. We introduce the SBP-SAT concepts via a number of examples in Sections \ref{sec:advec}, \ref{sec:advec_diff} and \ref{sec:advec_int}. In Section \ref{sec:conv_rate} we discuss convergence rates and in Section \ref{sec:non_sat} alternative ways to impose boundary conditions. In Section \ref{sec:system} and \ref{sec:system_disc} we explain the SBP-SAT method in a 2-D example. In Section \ref{sec:strict_stab} we discuss aspects of the time evolution of the discrete system and in Section \ref{sec:dual} we review results regarding dual consistency. In Section \ref{sec:other_schemes} we relate the SBP theory for finite difference schemes to other numerical methods. Section \ref{sec:app} contains a review of the various applications where SBP-SAT schemes have been used to obtain numerical approximations. Finally, we discuss some aspects of non-linear theory in Section \ref{sec:non_lin}.

\section{Theory for Initial Boundary Value Problems}\label{sbp_theory}
We begin by reviewing the general theory for Initial-Boundary-Value Problems (IBVP). Most of the material in this section can be found in \cite{GustafssonKreissOliger}. This sets the scene for the subsequent sections focusing on SBP-SAT schemes.
\subsection{Preliminaries}\label{sec:prels}
 Consider the initial-boundary-value problem
\begin{align}
u_t&=P(x,t,\partial_x)u +F, \quad 0\leq x \leq 1,\quad t\geq 0,\nonumber \\
u(x,0)&=f(x),\label{pde1} \\
L_0(t,\partial_x)u(0,t)&=g_0(t),\nonumber \\
L_0(t,\partial_x)u(1,t)&=g_1(t),\nonumber 
\end{align}
where $u=(u^1, ..,u^m)^T$ and  $P$ is a differential operator with smooth matrix coefficients. $L_0$ and $L_1$ are differential operators defining the boundary conditions. $F=F(x,t)$ is a forcing function.
\begin{definition}\label{def_wp}
The IBVP (\ref{pde1}) with $F=g_0=g_1=0$ is well-posed, if for every $f\in C^{\infty}$ that vanishes in a neighborhood of $x=0,1$, it has a unique smooth solution that satisfies the estimate 
\begin{align}
\|u(\cdot,t)\|\leq Ke^{\alpha_c t}\|f\|\label{est1}
\end{align}
where $K,\alpha_c$ are constants independent of $f$.
\end{definition}
A problem is well-posed if it satisfies an estimate like (\ref{est1}). This require that appropriate boundary conditions are used which, along with the estimate, guarantees that a unique smooth solution exists. 
The extension to inhomogeneous boundary condition is possible via a transformation $\tilde u=u-\Psi$ where $\Psi(x,0)=f(x)$ and $\Psi(\{0,1\},t)=g_{0,1}$ such that $\tilde u$ satisfies (\ref{pde1}) with homogeneous data (and a different but smooth forcing function). However, to obtain $\Psi$, $g_{0,1}$ is required to be differentiable in time. This requirement is not necessary if the problem is strongly well-posed as defined below.
\begin{definition}
The IBVP (\ref{pde1}) is \emph{strongly well-posed}, if it is well-posed and 
\begin{align}
\|u(\cdot,t)\|^2\leq K(t)\left(\|f\|^2+\int_0^t(\|F(\cdot,\tau)\|^2+|g_0(\tau)|^2+|g_1(\tau)|)d\tau\right) \label{est2}
\end{align}
holds instead of (\ref{est1}). The function $K(t)$ is bounded for every finite time and independent of  $F, g_0, g_1, f$.
\end{definition}
\begin{remark}
We have tacitly assumed that boundary and initial data are compatible. Compatibility is necessary to ensure a smooth solution. E.g., for continuity one must require that $g_0(t)=f(0)$ and $g_1(t)=f(1)$. For higher continuity, derivatives of $g_{0,1}$ and $f$ must be related via the equation. (See \cite{GustafssonKreissOliger}.)
\end{remark}


Next, we turn to semi-discretizations of (\ref{pde1}).  Let $x_j=jh$, $j=0,1,...,N$ where $h=1/N$ is the grid spacing.  We define the grid functions  $f_j=f(x_j)$ and $F_j(t)=F(x_j,t)$. With each grid point, we also associate a function (the approximate solution) $v_j(t)$. We will use the notion \emph{smooth grid function} to denote a grid function being the projection of a smooth function. Furthermore, we use $\|\cdot\|_h$ to denote a discrete $L^2$-equivalent norm.

Then we approximate (\ref{pde1}) by
\begin{align}
(v_j)_t&=Q_j(x_j,t)v_j +F_j+\Ss_j, \quad j=0,...,N,\quad t\geq 0\label{scheme1} \\
v_j(0)&=f_j\nonumber 
\end{align}
where $Q_j$ is the approximation of  $P$ at $x_j$. $\Ss_j$ is the SAT term. $\Ss_j$ is zero except at a few points close to the boundary. The next definition is in analogy with Definition \ref{def_wp} above.
\begin{definition}
Consider (\ref{scheme1}) with $F=g_0=g_1=0$. Let $f$ be the projection of a $C^{\infty}$ function that vanish at the boundaries. The approximation is \emph{stable}, if, for all $h\leq h_0$
\begin{align}
\|v(t)\|_h\leq Ke^{\alpha_d t}\|f\|_h\label{disc_est1}
\end{align}
holds and $K,\alpha_d$ are constants independent of $f$.
\end{definition}
The same arguments as in the case of well-posedness can be used here to extend this notion to general inhomogeneous data in $L^2$. To do so, $g_{0}$ and $g_1$ must be differentiable. The  following definition rids this assumption.
\begin{definition}
The approximation (\ref{scheme1}) is \emph{strongly stable} if it is stable and
\begin{align}
\|v(t)\|^2_h\leq K(t)\left(\|f\|^2_2+\max_{\tau\in [0,t]} \|F(\tau)\|_h^2 +\max_{\tau\in [0,t]} \|g_0(\tau)\|_h^2+ \max_{\tau\in [0,t]} \|g_1(\tau)\|_h^2\right)\label{disc_est2}
\end{align}
holds instead of (\ref{disc_est1}). $K(t)$ is bounded for any finite $t$ and independent of $F, g_0, g_1, f$.
\end{definition}
Let $\bar u$ be the projection of the exact solution $u(x,t)$ on the grid, i.e., $\bar u_i=u(x_i,t)$. Then the (local) truncation error, $T$, is defined by
\begin{align}
(\bar u_j)_t&=Q(x_j,t,D)\bar u_j +F_j+ \Ss_j+T_j, \quad j=0,...,N,\quad t\geq 0\nonumber 
\end{align}
For an SBP-SAT scheme the truncation error takes the form,
\begin{align}
T=\left( \OO(h^r), ...\OO(h^r),\OO(h^p)...\OO(h^p),\OO(h^r)...\OO(h^r)\right)^T \label{truncation}
\end{align}
where $r<p$ and the number of points with accuracy $r$ is finite and confined to the vicinity of the boundary. A scheme with $r,p>0$ is termed \emph{consistent}. The \emph{order of accuracy} refers to the exponent in the truncation error. In (\ref{truncation}) that would be $r$  (if $r<p$) or  in this case we may say $(p,r)$ given that the structure of $T$ is known.   Moreover, the (solution) \emph{error} is defined as $e_j(t)=v_j-\bar u_j$ and the \emph{convergence rate} of the method is $q$ if  $\|e\|_2\leq \OO(h^q)$, where $\|\cdot\|_2$ denotes the discrete $L^2$ norm.

Although the definitions of (strong) well-posedness and (strong) stability are similar, the bounds in the corresponding estimates need not be the same. The following definition connects the growth rates of the continuous and semi-discrete solutions.
\begin{definition}\label{def:strict_stab}
Assume that (\ref{pde1}) is well-posed with $\alpha_c$ in (\ref{est1}) and that the semi-discrete approximation is stable with $\alpha_d$ in (\ref{disc_est1}). If $\alpha_d\leq \alpha_c+\mathcal{O}(h)$ for $h\leq h_0$ we say that the approximation is \emph{strictly stable}.
\end{definition}

In the above definitions, the schemes are semi-discrete, i.e., time is left continuous. Clearly, only fully discrete schemes are useful in practice. In \cite{KreissWu93}, it was shown that semi-discrete stable schemes are, under certain conditions, stable when discretized in time using Runge-Kutta schemes. This problem was further studied in \cite{LevyTadmor98} with a particular focus on energy stable semi-discrete schemes (such as the scheme above). Recently it was shown in \cite{NordLun13} how to extend the the SBP-SAT technique in space to the time-domain, where fully discrete sharp energy estimates can be obtained, see section \ref{sec:compaspects} for more details.

\section{Theory for SBP-SAT schemes}\label{sbp_sat_theory}
In this section we present the SBP-SAT schemes by considering a few examples. 

\subsection{The advection equation}\label{sec:advec}

Consider 
\begin{align}
u_t+au_x&=0,\quad 0\leq x \leq 1, \quad t>0,\label{const_advec}\\
u(x,0)&=f(x),\nonumber \\
u(0,t)&=g_0(t)\nonumber
\end{align}
where $f$ and $g_0$ are  initial and boundary data in $L^2$ and $a>0$. The semi-discretization of (\ref{const_advec}) is,
\begin{align}
\uu_t+aD\uu&=P^{-1}\Ss,\quad  \quad t>0 \label{disc_advec}\\
\uu(0)&=\f \nonumber 
\end{align}
where $\uu(t)=(u_0(t),u_1(t),...u_N(t))^T$ and similarly $\f=(f(x_0),...,f(x_N))^T$. $\Ss=(\Ss_0,0,...,0)^T$ is the SAT enforcing the boundary condition (to be defined later).  $D$ is a difference matrix.
\begin{definition}\label{def_sbp}
$D$ is a $(p,r)$-accurate first-derivative SBP operator, if its truncation error is given by (\ref{truncation}), $D=P^{-1}Q$ where $Q+Q^T=B=diag(-1,0,0,...,0,1)$, $P$ is symmetric positive definite and defines an $L^2$-equivalent norm $\|v\|^2_P=v^TPv$.
\end{definition}
The SBP operators for first-derivative approximations associated with explicit central difference schemes are found in \cite{Strand94}. (Note that the entries of $P$ scale as the stepsize h.) In  \cite{KreissScherer74} it was proven that if $P$ is a diagonal matrix, the boundary closures can be at most  of order $r=\tau$ when the interior stencils are of order $p=2\tau$. Such operators exist with order up to eight in the interior. If $P$ is allowed to be non-diagonal with small blocks at the boundaries, the truncation error can be $(\tau,\tau-1)$.  Furthermore, there exists SBP schemes for other than explicit finite differences. In \cite{AbarbanelChertock00,Abarbanel_etal00} and \cite{CarpenterGottlieb94}, SBP-type boundary closures for compact interior discretizations were derived. 

Let $a^+=\max(a,0)=(|a|+a)/2$. Stability of the advection equation was established in \cite{CarpenterGottlieb94} for SBP-SAT schemes.
\begin{proposition}\label{prop_advec}
Let $D$ be an SBP operator and $\Ss_0=\sigma a^+[P^{-1}]_0(u_0-g_0)$. If $\sigma<-1/2$ the scheme (\ref{disc_advec}) is strongly stable.
\end{proposition}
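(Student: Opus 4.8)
The plan is to use the discrete energy method in the norm defined by $P$. First I would form the $P$-weighted inner product of the scheme (\ref{disc_advec}) with the solution, i.e.\ multiply from the left by $\uu^T P$. Since $P$ is symmetric positive definite, the time-derivative term becomes $\uu^T P \uu_t = \tfrac12 \frac{d}{dt}\|\uu\|_P^2$ with $\|\uu\|_P^2=\uu^T P\uu$, which is the $L^2$-equivalent norm from Definition \ref{def_sbp}.

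Next I would rewrite the transport term using the SBP structure. Because $PD=Q$ and $Q+Q^T=B=\mathrm{diag}(-1,0,\dots,0,1)$, it collapses to $a\,\uu^T P D\uu = a\,\uu^T Q\uu = \tfrac{a}{2}\uu^T(Q+Q^T)\uu = \tfrac{a}{2}(u_N^2-u_0^2)$: the interior contributions cancel by skew-symmetry and only the two boundary points survive, which is the discrete analogue of integration by parts. The SAT is constructed precisely so that, after the $P$-weighting (the $P^{-1}$ in the scheme being cancelled by $P$, and $a^+=a$ since $a>0$), its contribution to the energy rate collapses to the scalar boundary penalty $\sigma a\, u_0(u_0-g_0)$ acting only at the inflow point $x_0$. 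Collecting everything yields
\begin{align}
\tfrac12\frac{d}{dt}\|\uu\|_P^2 = -\tfrac{a}{2}u_N^2 + a\Big(\tfrac12+\sigma\Big)u_0^2 - \sigma a\, u_0 g_0. \nonumber
\end{align}

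The homogeneous estimate (\ref{disc_est1}) then follows at once: the outflow term $-\tfrac{a}{2}u_N^2$ is nonpositive, and for $g_0=0$ the inflow term $a(\tfrac12+\sigma)u_0^2$ is nonpositive precisely when $\sigma\le-\tfrac12$, so $\frac{d}{dt}\|\uu\|_P^2\le 0$ and hence $\|\uu(t)\|_P\le\|\f\|_P$ (with $\alpha_d=0$). For the strong estimate (\ref{disc_est2}) I would discard the harmless term $-\tfrac{a}{2}u_N^2\le 0$ and complete the square in $u_0$ in the remainder $a\beta\,u_0^2-\sigma a\,u_0 g_0$, where $\beta=\tfrac12+\sigma$; this equals $a\beta\big(u_0-\tfrac{\sigma g_0}{2\beta}\big)^2-\tfrac{a\sigma^2}{4\beta}g_0^2$. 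Since $\beta<0$ the square is nonpositive and one is left with $\frac{d}{dt}\|\uu\|_P^2\le C\,g_0^2$, $C=\tfrac{a\sigma^2}{2|\beta|}$. Integrating in time and bounding $\int_0^t g_0^2\,d\tau\le t\max_{[0,t]}g_0^2$ produces an estimate of the form (\ref{disc_est2}) in the $P$-norm, establishing strong stability.

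The step I expect to be the main obstacle, and the reason for the \emph{strict} inequality $\sigma<-\tfrac12$ rather than $\sigma\le-\tfrac12$, is exactly this completion of the square. At the borderline value $\sigma=-\tfrac12$ the coefficient $\beta$ vanishes, the $u_0^2$ term disappears, and the linear cross term $-\sigma a\,u_0 g_0$ can no longer be absorbed by any negative-definite quadratic; one then only recovers plain stability for homogeneous data, not the data-controlled bound required by (\ref{disc_est2}). Keeping $\beta$ bounded away from zero is what makes the inflow boundary dissipative enough to control the boundary forcing, and some care is needed to state the final bound in the $P$-norm, which is equivalent to the discrete $L^2$ norm appearing in the definitions.
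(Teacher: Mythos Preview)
Your proposal is correct and follows essentially the same route as the paper: apply the $P$-weighted energy method, use $Q+Q^T=B$ to reduce to boundary terms, drop the dissipative outflow contribution, and complete the square in $u_0$ to obtain $\frac{d}{dt}\|\uu\|_P^2\le -\dfrac{a\sigma^2}{1+2\sigma}\,g_0^2$, which is exactly the paper's bound since $2\beta=1+2\sigma$. Your explicit discussion of why the strict inequality $\sigma<-\tfrac12$ is needed (the vanishing of $\beta$ at the borderline) is a nice addition not spelled out in the paper.
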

\begin{proof}
$\|\uu\|^2_t=\uu_t^TP\uu+\uu P \uu_t=-a\uu^T(Q+Q^T)\uu+2\uu \Ss$ gives $\|\uu\|^2_t\leq au^2_0+2a\sigma u_0(u_0-g_0)$. If $\sigma<-1/2$ we obtain $\|\uu\|^2_t\leq -\frac{a\sigma^2}{(1+2\sigma)} g_0^2$. 
\end{proof}

\subsection{The advection-diffusion equation}\label{sec:advec_diff}
Consider
\begin{align}
u_t+au_x&=(\epsilon u_x)_x,\quad 0\leq x \leq 1, \quad t>0\label{advec_diff}.\\
u(x,0)&=f(x)\nonumber \\
au(0,t)-\epsilon u_x(0,t)&=g_0(t)\nonumber\\
\epsilon u_x(1,t)&=g_1(t)\nonumber 
\end{align}
where both $a$ and $\epsilon$ are positive constants. 
\begin{remark}
Equation (\ref{advec_diff}) is often used as a model problem for Navier-Stokes equations and with that interpretation,  the left and right boundary conditions are prototypes of far-field conditions based on the characteristic direction $a$. 
\end{remark}

To demonstrate that the problem is strongly well-posed, we apply the energy method.
\begin{align}
\|u\|_t^2+2\epsilon \|u_x\|^2&=(au^2-2\epsilon u u_x)_0-(au^2-2\epsilon u u_x)_1 \nonumber \\
                                                &= a^{-1}\left[ (au - \epsilon u_x)^2 - (\epsilon u_x)^2\right]_0-
                                                      a^{-1}\left[ (au - \epsilon u_x)^2 - (\epsilon u_x)^2\right]_1. \nonumber 
\end{align}
The construction above where the boundary terms are factored into clean squares was first done in
\cite{Nordstrom95(1)} and later used in \cite{NordstromCarpenter99}. Keeping in mind that $a$ is positive
we obtain the final strong estimate
\begin{equation}\label{adv_diff_est} 
\|u\|_t^2+2\epsilon \|u_x\|^2= a^{-1}\left[g_0^2 - (\epsilon u_x)^2_0 \right]-a^{-1}\left[(au - \epsilon u_x)^2_1 - g_1^2 \right].
\end{equation}
The most straightforward SBP-SAT discretization of this problem is
\begin{align}
\uu_t+aD\uu=D(\epsilon D\uu) + P^{-1}\Ss\label{disc_advec_diff},
\end{align}
where
\[ (\Ss_0)_0=\sigma_0(au_0-\epsilon (D\uu)_0-g_0(t)),\quad (\Ss_1)_N=\sigma_1(\epsilon (D\uu)_N-g_1(t)).
\]
\begin{proposition}
The scheme is strongly stable with $\sigma_0=-1,\sigma_1=-1$.
\end{proposition}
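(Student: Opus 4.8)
The plan is to reproduce the continuous estimate (\ref{adv_diff_est}) at the discrete level by the energy method, exploiting the SBP relation $Q+Q^T=B=\mathrm{diag}(-1,0,\dots,0,1)$ together with the norm $\|\cdot\|_P$. First I would multiply (\ref{disc_advec_diff}) by $\uu^TP$ from the left, add the transpose, and use the symmetry of $P$ and $PD=Q$ to obtain
\[
\tfrac{d}{dt}\|\uu\|_P^2 = -a\,\uu^T(Q+Q^T)\uu + 2\,\uu^TQ(\epsilon D\uu) + 2\,\uu^T\Ss .
\]

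The two volume terms are then reduced to pure boundary data. For the advection part, $\uu^T(Q+Q^T)\uu=\uu^TB\uu=u_N^2-u_0^2$, so it contributes only the end-point values. For the diffusion part I would introduce the auxiliary grid function $\ww=\epsilon D\uu$, the discrete analogue of $\epsilon u_x$, and integrate by parts discretely: writing $Q=B-Q^T$ and using $Q\uu=\epsilon^{-1}P\ww$ gives
\[
\uu^TQ\ww=\uu^TB\ww-\epsilon^{-1}\|\ww\|_P^2=(u_Nw_N-u_0w_0)-\epsilon\|D\uu\|_P^2 .
\]
This recovers the dissipation $2\epsilon\|u_x\|^2$ of (\ref{adv_diff_est}) and its associated boundary flux in one stroke.

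What remains is to insert the SAT and collect the boundary terms separately at $x=0$ and $x=1$. With $\sigma_0=\sigma_1=-1$ the mixed products $u_0w_0$ and $u_Nw_N$ cancel exactly against the penalty, and each boundary group completes into a perfect square: the left contributes $-a(u_0-g_0/a)^2+a^{-1}g_0^2\le a^{-1}g_0^2$ and the right $-a(u_N-g_1/a)^2+a^{-1}g_1^2\le a^{-1}g_1^2$. Collecting everything yields
\[
\tfrac{d}{dt}\|\uu\|_P^2 + 2\epsilon\|D\uu\|_P^2 \le a^{-1}\bigl(g_0^2+g_1^2\bigr),
\]
and integrating in time, discarding the non-negative dissipation, and invoking the equivalence $\|\cdot\|_P\sim\|\cdot\|_h$ delivers a bound of the form (\ref{disc_est2}), i.e. strong stability.

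The bookkeeping is routine; the step that actually carries the argument is the discrete integration by parts for the diffusion term, where the SBP structure $Q+Q^T=B$ and the identity $Q\uu=\epsilon^{-1}P\ww$ must combine to produce simultaneously the correct dissipation and a boundary flux compatible with the penalty. The main obstacle I anticipate is checking that, for these far-field boundary operators, the cross terms at each end cancel precisely when $\sigma_0=\sigma_1=-1$; a different choice leaves residual indefinite products that defeat the completion of squares and the estimate fails.
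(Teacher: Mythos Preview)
Your argument is correct and is exactly the paper's approach: energy method, SBP integration by parts on both the advective and diffusive terms, then completion of squares at each boundary with $\sigma_0=\sigma_1=-1$. The paper merely states the resulting identity $\|\uu\|_t^2+2\epsilon\|D\uu\|^2=a^{-1}[g_0^2-(au_0-g_0)^2]-a^{-1}[(au_N-g_1)^2-g_1^2]$ without the intermediate steps you spell out, so your write-up is in fact more detailed than theirs.
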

\begin{proof}
Use the energy method to obtain
\begin{align}
\|\uu\|_t^2+2\epsilon \|D\uu\|^2&=a^{-1}\left[g_0^2 - (au-g_0)^2_0 \right]-a^{-1}\left[(au -g_1)^2_N - g_1^2 \right],\nonumber 
\end{align}
which is very similar to the continuous estimate (\ref{adv_diff_est}) by considering the boundary conditions. \end{proof}
It should be remarked that the left boundary conditions is easily generalized to
\begin{align}
\beta au(0,t)-\epsilon u_x(0,t)&=g_0(t),\quad \beta>\frac{1}{2}.\nonumber
\end{align}

The advection-diffusion equation in the SBP-SAT framework was first studied in \cite{CarpenterNordstrom99}, where the original proofs are found. Furthermore, the authors studied discretizations where the domain is subdivided in blocks, with possibly different grid spacings and the stability conditions at the interfaces were derived.

We close this section by pointing out that other boundary conditions than those in (\ref{advec_diff}) are possible. For instance in \cite{SvardNordstrom08}, Dirichlet boundary conditions were proven stable.

\subsection{Interface treatment for the advection equation}\label{sec:advec_int}

To be able to generate a grid, it is often necessary to split the domain into several patches, each containing one smooth piece of the grid. (See the Section \ref{sec:intro} and \ref{sec:multiD_NS} for discussions on multi-block gridding.)


Here, we will demonstrate the interface procedure for the advection equation. Consider,
\begin{align}
u_t+au_x&=0,\quad -1\leq x \leq 1, \quad t>0\label{advec_inter}.\\
u(x,0)&=f(x)\nonumber \\
au(-1,t)&=a g_{-1}(t)\nonumber
\end{align}
where $a>0$ is a constant. In Section \ref{sec:advec}, we showed that this problem is well-posed. (The change in domain size is not important for this conclusion.) Furthermore, a stable single-grid discretization, including the boundary condition, was  proposed. Here, we split the domain at $x=0$. We discretize the left piece, $-1\leq x \leq 0$ with $N+1$ points and the right, $0\leq x \leq 1$, with $M+1$. We denote the grid spacings as $h_L=1/M$ and $h_R=1/N$. The subscripts $L$ and $R$ will be used to signify operators in the two domains, e.g. $D_L=P^{-1}_LQ_L$ is the SBP difference operator in the left domain. We denote the discrete solution vectors as $\vv$ in the left domain and $\uu$ in the right. Note that both $v_N(t)$ and $u_0(t)$ represent approximations of $u(0,t)$. Hence, the interface condition $v_N(t)=u_0(t)$ will be enforced weakly in the scheme. We write the semi-discrete scheme as,
\begin{align}
\vv_t+aD_L\vv&=\Ss_L+\Ss_{0L}, \quad t>0 \nonumber \\
\vv(0)&=\f_L, \label{disc_inter}\\
\uu_t+aD_R\uu&=\Ss_{0R}, \nonumber \\
\uu(0)&=\f_R. \nonumber
\end{align}
Here $\Ss_L$ is the SAT at $x=-1$ and corresponds to $\Ss$ in Proposition \ref{prop_advec}. 
\begin{proposition}
Let $D_L,D_R$ be SBP operators, $[\Ss_{0L}]_N=\sigma_L [P^{-1}_L]_N(v_N-u_0)$ and $[\Ss_{0R}]_0=\sigma_R [P^{-1}_R]_0(u_0-v_N)$ with $\sigma_L\leq \frac{a}{2}$ and $\sigma_R=\sigma_L-a$. Then the scheme (\ref{disc_inter}) is strongly stable. Furthermore, the discretization is conservative in the sense that it satisfies the weak form of the equation.
\end{proposition}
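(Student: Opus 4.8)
The plan is to establish the estimate by the discrete energy method and then verify conservation by a separate summation-by-parts (SBP) test against a smooth grid function; the two hypotheses $\sigma_L\le a/2$ and $\sigma_R=\sigma_L-a$ will turn out to control, respectively, the sign of the interface energy and the cancellation of the interface flux.

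First I would form $\frac{d}{dt}(\|\vv\|_{P_L}^2+\|\uu\|_{P_R}^2)=2\vv^TP_L\vv_t+2\uu^TP_R\uu_t$ and substitute (\ref{disc_inter}). Since $P_LD_L=Q_L$ and $Q_L+Q_L^T=B_L$, the operator terms collapse to boundary contributions, $-2a\vv^TQ_L\vv=-a(v_N^2-v_0^2)$ and $-2a\uu^TQ_R\uu=-a(u_M^2-u_0^2)$, so that everything that remains is localized at the two physical ends $x=\pm1$ and at the interface $x=0$. The outflow end $x=1$ gives only $-au_M^2\le0$, and the inflow end $x=-1$ gives $av_0^2+2\vv^TP_L\Ss_L$, which I would bound by the data $g_{-1}$ exactly as in Proposition~\ref{prop_advec} (the penalty there uses $\sigma<-1/2$). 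Because $\Ss_{0L}$ and $\Ss_{0R}$ are supported only at the interface nodes, and the factors $[P_L^{-1}]_N,[P_R^{-1}]_0$ are precisely what make $P_L\Ss_{0L}$ and $P_R\Ss_{0R}$ reduce to pointwise penalties there, the residual interface contribution is $-av_N^2+2\sigma_Lv_N(v_N-u_0)+au_0^2+2\sigma_Ru_0(u_0-v_N)$.

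The hard part -- and essentially the only nonroutine step -- is to see that this interface quadratic is nonpositive. Substituting $\sigma_R=\sigma_L-a$ and collecting the coefficients of $v_N^2$, $u_0^2$ and $v_Nu_0$, the expression telescopes into the single perfect square $(2\sigma_L-a)(v_N-u_0)^2$, which is $\le0$ precisely when $\sigma_L\le a/2$. Thus the interface is dissipative in the jump $v_N-u_0$, and what is left is $\frac{d}{dt}(\|\vv\|_{P_L}^2+\|\uu\|_{P_R}^2)\le C\,g_{-1}^2$; integrating in time and identifying the initial value with $\|\f_L\|_{P_L}^2+\|\f_R\|_{P_R}^2$ yields the strong-stability estimate of the form (\ref{disc_est2}). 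I expect the main friction here to be purely bookkeeping: correctly assigning each SBP boundary term to an endpoint versus the interface, and keeping the factors of $2$ straight, after which the roles of the two hypotheses become transparent.

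Finally, for conservation I would pair the scheme with an arbitrary smooth grid function $\phi$ that is continuous across the interface ($[\phi_L]_N=[\phi_R]_0=:\phi_0$), forming $\phi_L^TP_L(\cdot)+\phi_R^TP_R(\cdot)$ and applying the SBP identity $\phi^TQ\vv=\phi^TB\vv-(Q\phi)^T\vv$ on each subdomain, which moves the derivative onto $\phi$ (a consistent volume term) and exposes the boundary fluxes. The interface flux coming from the difference operators, $a\phi_0(v_N-u_0)$, is cancelled by the combined interface penalties $\phi_0(\sigma_L-\sigma_R)(v_N-u_0)$ precisely when $\sigma_R=\sigma_L-a$ (the case $\phi\equiv\mathbf{1}$ shows directly that the total discrete mass then evolves only through the physical fluxes at $x=\pm1$). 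Hence the discrete equations form a consistent discretization of the weak form with a single-valued interface flux, which is the asserted conservation.
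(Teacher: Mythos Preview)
Your proposal is correct and follows essentially the same route as the paper: the energy method on each subdomain, lumping the $x=-1$ terms into a $BT_L$ handled by Proposition~\ref{prop_advec}, then showing the interface quadratic is nonpositive under $\sigma_L\le a/2$, $\sigma_R=\sigma_L-a$; for conservation, testing against a smooth $\phi$ continuous across the interface and checking that the interface contributions cancel precisely when $\sigma_R=\sigma_L-a$. Your explicit factorization of the interface energy as $(2\sigma_L-a)(v_N-u_0)^2$ is a cleaner statement of what the paper summarizes as ``yields negative coupling terms,'' but the argument is the same.
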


\begin{proof}
In the energy estimate we lump all the boundary terms at $x=-1$ in a term $BT_L$ and conclude that they are bounded due to Proposition \ref{prop_advec}. The energy estimate is obtained by multiplying the left scheme by $\vv^TP_L$ and the right by $\uu^TP_R$ and adding the results,
\begin{align}
(\|\vv\|^2)_t+(\|\uu\|^2)_t=-av_N^2+au_0^2+2v_N\sigma_L (v_N-u_0)+2u_0\sigma_R (u_0-v_N)+BT_L.\nonumber 
\end{align}
Using $\sigma_L\leq a/2$ and $\sigma_R=\sigma_L-a$ yields negative coupling terms. 
Hence, $(\|\vv\|^2)_t+(\|\uu\|^2)_t\leq BT_L$ and we have a bound. 

\begin{remark}
Note that the sign of $a$ is immaterial to the interface part of this proof. The SAT terms at the interface are identical if $a<0$
\end{remark}
Next, we turn to conservation. Specifically, we show that the discrete scheme will, upon convergence, capture a weak solution. To this end, we introduce a smooth test function $\Phi$ that vanishes at the boundaries. The weak form of (\ref{advec_inter}) is
\begin{align}
\int_{-1}^1\Phi u\,dx|_0^t-\int_0^t\int_{-1}^1   (\Phi_tu+\Phi_xau)\,dxdt=0\label{weak_lin}
\end{align}
We denote its restriction to the grid as $\phi_{L,R}$ such that $(\phi_{L,R})_j(t)=\Phi(x_j,t)$ and $(\phi_L)_N=(\phi_R)_0$. We multiply the left part of the scheme (\ref{disc_inter}) by $\phi^TP_L$, the right by $\phi^TP_R$ and integrate in time to obtain,
\begin{align}
&\phi_L^TP_L\vv|_0^T+\phi_R^TP_R\uu|_0^T - \nonumber \\
&\int_0^T(\phi_L)^T_tP_L\uu+(\phi_R)_t^TP_R\uu +a(D_L\phi_L)^TP_L\vv
+a(D_R\phi_R)^TP_R\uu\,dt=0 \label{weak_lin_disc}
\end{align}
where all the terms at the interface cancel out due to the conservation condition $\sigma_R=\sigma_L-a$. The relation (\ref{weak_lin_disc}) converges to (\ref{weak_lin}) thanks to the $L^2$ bound on the discrete solution.
\end{proof}

The proof here is a special case of the one given in \cite{CarpenterNordstrom99}. (See also \cite{GongNordstrom11,BergNordstrom12(2),ErikssonAbbasNordstrom11}.) In \cite{CarpenterNordstrom10} a general treatment of interfaces is presented. Non-conforming interface procedures are derived in \cite{NissenKreiss12,MattssonCarpenter10}. For generalizations to the multi-D Euler and Navier-Stokes equations, we refer to \cite{NordstromCarpenter99,NordstromGong09}. Non-linear analysis of interfaces can be found found in \cite{FisherCarpenter12,SvardOzcan13}

\subsubsection{Second-derivative SBP operators}\label{sec:sec_deriv}

In (\ref{disc_advec_diff}) the second derivative is discretized by applying the first derivative approximation twice. This leads to a wide stencil for the second derivative approximation.  To increase the effective accuracy, more compact stencils approximating the second derivative can be used.  That is (assuming $\epsilon=constant$), we would like a discretization 
\begin{align}
\uu_t+aD\uu=\epsilon D_2\uu + \Ss\label{disc_advec_diff2}
\end{align}
For this approximation to be SBP, certain conditions on $D_2$ must be imposed. They were originally identified in \cite{CarpenterNordstrom99}.
\begin{definition}
The second-derivative SBP matrix is defined as $D_2=P^{-1}(-S^TM+B)S$ with the following properties. Let $\uu$ be a smooth grid function, then $D_2\uu=u_{xx}+\mathcal{O}(h^r)$ and $[S\uu]_{0,N}=u_x(x_{0,N})+\mathcal{O}(h^r)$. Furthermore, $M$ is symmetric positive definite with $h n I \leq M \leq Nh I$ where $n,N$ are some constants.
\end{definition}
Clearly, $D_2=DD=P^{-1}QP^{-1}Q=P^{-1}((P^{-1}Q)^TP-B)P^{-1}Q$ is one such derivative with $S=P^{-1}Q$ and $M=P$.  Others are found in \cite{CarpenterNordstrom99,MattssonNordstrom04}. The $D_2$ operators have similar accuracy constraints as the first-derivative approximations. That is, with diagonal $P$ the order is $(2\tau,\tau)$ and with non-diagonal $P$, the boundary accuracy can be raised to ($\tau,\tau-1$).

To accommodate a varying $\epsilon$ while still keeping the stencil narrow, a new set of operators were derived in \cite{Mattsson11}.  Furthermore, in \cite{MattssonSvard08} the accuracy and stability properties of various different SBP second-derivative approximations were discussed.

The papers  \cite{AbarbanelDitkowski97,AbarbanelDitkowski99}  are also SBP-SAT-like schemes for diffusive equations, but designed as an immersed boundary technique. That is, the domain is covered by a non-conforming Cartesian grid.

\begin{remark}
An often expressed criticism against the use of $DD\uu$ is that the stencils are wide and that they do not damp the highest frequency and therefore are prone to "spurious oscillations''. That is only true for non-linear problem in the non-linear regime for which the stability proofs do not hold. For linear problems such oscillations can not be generated by a stable scheme due to the bound in $L^2\cap L^{\infty}$.
\end{remark}

\subsection{Convergence rates}\label{sec:conv_rate}

The famous Lax-Richtmyer Theorem (\cite{LaxRichtmyer56}) states that a consistent scheme is convergent if it is stable. The theorem is very generally posed and does not require data (and consequently the solution) to be smooth. Therefore, no prediction of the convergence rate is possible. Our definitions of well-posedness requires solutions to be smooth, which in turn allows estimates of convergence rates. However, deriving convergence rates is a non-trivial task. Considering, (\ref{pde1}) and its discretization (\ref{scheme1}), it is easy to see that the error $e$ satisfies a scheme equivalent to (\ref{scheme1}) and the estimates (\ref{disc_est1}) or (\ref{disc_est2}) apply to $e$. Therefore, if $r=p$, the convergence rate is trivially $p$. If on the other hand $r<p$, and since the estimates are stated in $L^2$, the convergence rate can be shown to be at least $\min(r+1/2,p)$. This rate, however, is suboptimal.

In \cite{Gustafsson75} and \cite{Gustafsson81} the question of global convergence rate was studied. These papers are commonly cited to motivate a drop in the boundary accuracy according to the rule: "For a $(p,r)$-accurate approximation with $r<p$, the convergence rate is $r+1$". This is true given that the scheme satisfies a determinant condition (or equivalently the Kreiss condition \cite{GustafssonKreissOliger}). However, it is not easy to prove that the Kreiss condition is satisfied for a realistic problem, and hence the analysis is often neglected. It would be desirable to infer the higher convergence rate at the boundary directly from the energy estimate. An effort in this direction is found, in \cite{AbarbanelDitkowski00} where parabolic problems were studied and using the energy method the rate $\min(r+3/2,p)$ was proven. In practice, however, a convergence rate of $\min(r+2,p)$ is observed.

In \cite{SvardNordstrom06} the question of convergence rate was revisited with the intention to tie the convergence rate to energy estimates. For schemes approximating PDEs with principal part of order $q$ and under certain accuracy conditions for the boundary conditions, it was shown that the convergence rate is $\min(r+q,p)$, if the numerical solution is bounded in $L^2\cap L^{\infty}$. In a recent paper \cite{Svard12_3},  $L^{\infty}$ bounds were derived using the energy estimates. Hence, any scheme satisfying an energy estimate is automatically bounded in $L^{\infty}$.

In (\ref{disc_advec_diff}), we approximate the second derivative by applying $D$ twice. This makes the approximation of the second-derivative of order $(r-1,p)$, see \cite{SvardNordstrom06}.  Thanks to the bound in ${L^2\cap L^{\infty}}$ (\cite{Svard12_3}) and \cite{SvardNordstrom06}, the global convergence rate is $\min(r+1,p)$. The increased truncation error is exactly matched by the increased convergence rate for a parabolic problem. However, with the dedicated second-derivative approximations (\ref{disc_advec_diff2}), the truncation error is $(r,p)$ for both the hyperbolic and parabolic part and consequently the convergence rate is $\min(r+2,p)$. For precise accuracy conditions on each operator, we refer to \cite{SvardNordstrom06}.

\begin{remark}
Note that the results above hold even in the extreme case where the approximation of the PDE is inconsistent near the boundary. For instance, the biharmonic equation $u_t=-u_{xxxx}$ was computed by applying the (4,3)-accurate first-derivative 4 times. This rendered the approximation of $u_{xxxx}$ to be 0th-order accurate near the boundary. Still, a convergence rate of 4 was recorded in accordance with the theory.
\end{remark}


\subsection{Alternative boundary treatments}\label{sec:non_sat}

The SAT technique to enforce boundary conditions has been instrumental in the development of high-order finite difference schemes. It is relatively simple to implement and very versatile when proving stability. However, SATs are not the only way to enforce boundary conditions in conjunction with the SBP difference operators. The simplest, and most common, way to enforce boundary conditions for finite difference schemes is simply to overwrite the solution values on the boundary with boundary data every time step. We refer to this technique as the \emph{injection method}. For simple PDEs, such as the advection equation, this may yield a stable approximation. However, already for linear systems in 1-D with waves travelling through the boundary in both directions this technique is doubtful. In \cite{SvardMishra12} it was noted that injection was unstable for the Euler equations even at a supersonic inflow (where it would appear sensible to use since all characteristics are directed into the domain). For more information on the injection method, see \cite{GustafssonKreissOliger}.

Another technique to enforce boundary conditions is the projection method. (See \cite{Olsson95(1),Olsson95(2), GerritsenOlsson96, GerritsenOlsson98}.) In this technique the solution is projected in each time step onto a subspace satisfying the boundary conditions and stability is proven with the energy method. For an extensive evaluation of the different techniques to enforce boundary conditions, see \cite{Mattsson03}. See also \cite{Bodony10} for a thorough evaluation of SAT boundary conditions in aeroacoustic applications; in \cite{BergNordstrom11} and also \cite{SvardNordstrom08} further  accuracy assessments of the SAT procedure are found.

Furthermore, the SAT method is a weak implementation of the boundary condition in the sense that the boundary points ($\uu_{\{0,N\}}$) are unknowns and never explicitly set to the boundary value. Indeed, the boundary conditions are generally never satisfied exactly. The SAT acts as a forcing term driving this discrepancy to zero. On the other hand, both injection and projection are examples of strong enforcement of boundary conditions. Strong enforcement can sometimes be used for linear problems, if stability can be obtained. However, for the Euler equations, it is more subtle. It can be shown that to obtain a well-posed problem, the boundary conditions cannot be enforced strongly. (See \cite{DuBoisLeFloch88}). This makes the SAT technique the preferred choice for non-linear problems.

\subsection{Systems}\label{sec:system}

We will introduce the use of SBP-SAT schemes in a multi-dimensional setting for a hyperbolic system of partial differential equations, which serves as a model for the 2-D Euler equations. For simplicity, we assume that the boundaries are all of far-field type.
\begin{align}
u_t+Au_x+Bu_y&=0, \quad 0<x,y<1, \quad 0< t\leq T \label{system}\\
A^+u(0,y,t)&=A^+g_W(y,t)\nonumber \\
A^-u(1,y,t)&=A^-g_E(y,t)\nonumber\\ 
B^+u(x,0,t)&=B^+g_S(x,t)\nonumber \\
B^-u(x,1,t)&=B^-g_N(x,t)\nonumber \\
u(x,y,0)&=f(x,y)\nonumber 
\end{align}
Here, $u$ is a column vector with $m$ components and the matrices $A,B$ are symmetric. Let $A^{\pm},B^{\pm}$ denote their positive and negative parts, i.e. $X\Lambda^+ X^T=A^+ $ where $\Lambda^+$ contains the positive eigenvalues and $\Lambda^+ + \Lambda^- = \Lambda$.   We assume that $g_{W,E,S,N},f$ are bounded in $L^2\cap L^{\infty}$ and $\|\cdot\|$ denotes the $L^2$-norm.  The number of boundary conditions are correct since only in-going characteristics are given at ${x,y}={0, 1}$.

First, we demonstrate well-posedness of (\ref{system}) by deriving an energy estimate. That is we multiply (\ref{system}) by $u^T$ and integrate by parts in space.
\begin{align}
\frac{1}{2}\|u\|_t^2+\int_0^{1}\int_0^1u^TAu_x dx\,dy +\int_0^{1}\int_0^1u^TBu_y dx\,dy =0\nonumber 
\end{align}
or,
\begin{align}
\|u\|_t^2-\int_0^{1}u(0,y,t)^TAu(0,y,t) dy +\int_0^{1}u(1,y,t)^TAu(1,y,t) dy& \nonumber \\
-\int_0^{1}u(x,0,t)^TBu(x,0,t) dx +\int_0^{1}u(x,1,t)^TBu(x,1,t) dx &=0\nonumber 
\end{align}
In analogy with the SATs in the discrete case, we invoke the boundary conditions by adding
\begin{align}
+2 \int_0^{1}u(0,y,t)^T(A^+u(0,y,t)-A^+g_W(y,t)) dy\quad (=0) \nonumber \\
-2 \int_0^{1}u(1,y,t)^T(A^-u(1,y,t)-A^-g_E(y,t)) dy\quad (=0) \label{weakBC}\\
+2 \int_0^{1}u(x,0,t)^T(B^+u(x,0,t)-B^+g_S(x,t)) dy\quad (=0) \nonumber \\
-2 \int_0^{1}u(x,1,t)^T(B^-u(x,1,t)-B^-g_N(x,t)) dy\quad (=0) \nonumber 
\end{align}
\begin{remark}
We could have added a scaling parameter in front of every boundary term in (\ref{weakBC}). The derivation below would then give a range of permissible values of the parameters.
\end{remark}
To reduce notation, we only write down the derivation for $x=0$ terms.
\begin{align}
\|u\|_t^2-\int_0^{1}u(0,y,t)^TA^-u(0,y,t) dy +\int_0^{1}u(0,y,t)^TA^+u(0,y,t) dy &\nonumber \\
-2\int_0^{1}u(0,y,t)^TA^+g_W(y,t) dy + ... &=0\nonumber 
\end{align}
The terms associated with outflow at each boundary do not contribute to a growth and are dropped. (The $A^-$ term at $x=0$ etc.) We obtain
\begin{align}
\|u\|_t^2&\leq \int_0^{1} g_W^TA^+g_W\,dy-\int_0^{1} (u-g_W)^TA^+(u-g_W)\,dy. \label{contest}
\end{align}
The first term on the right-hand side is positive but bounded and represents the energy that is passed into the domain through the boundary. The second term vanishes thanks to the boundary condition.  Integrating in time, from 0 to the final time $T$, will give the desired bound on $\|u(\cdot,\cdot,T)\|$. 
\begin{remark}
No-penetration wall boundary conditions  can be treated in a similar way and an energy estimates will follow, \cite{SvardNordstrom08}.
\end{remark}
\begin{remark}
If $A$ and $B$ are obtained from the linearized Euler equations, the above estimate demonstrates stability of smooth solutions to the non-linear Euler equations.
\end{remark}

\subsection{Spatial discretization}\label{sec:system_disc}

To define an SBP-SAT semi-discretization of (\ref{system}), we introduce the computational grid, $x_i=ih_x$, $i\in\{0,1,2,...,N\}$ and $y_j=jh_y$, $j\in\{0,1,2,...,M\}$ where $h_x,h_y>0$ are the grid spacings.   We will also need the vectors $e_0=(1,0,0...,0)^T$ and $e_N=(0,...,0,1)^T$, and the matrices $E_0=e_0e_0^T$ (1 in the upper-right corner and 0 elsewhere) and $E_N=e_Ne_N^T$ (1 in the lower-left corner and 0 elsewhere).   With this notation we have $Q+Q^T=-E_0+E_N$. 

Next we extend the SBP operators to two dimensions by the use of Kronecker products. The Kronecker product of two matrices $A,B$ is defined as
\begin{align}
A\otimes B = \left(\begin{array}{ccc}
a_{11}B & \hdots & a_{1N}B\\
\vdots & &\vdots\\
a_{N1}B & \hdots & a_{NN}B\\
\end{array} \right).
\end{align}
It satisfies $A\otimes B + C\otimes D = (A+C)\otimes (B+D)$ and $(A\otimes B)(C\otimes D) = (AC\otimes BD)$ assuming that both $A$ and $C$ have the same dimensions and so has $B$ and $D$. Furthermore, $(A\otimes B)^T=(A^T\otimes B^T)$ and, if $A,B$ are invertible then $(A\otimes B)^{-1}=(A^{-1}\otimes B^{-1})$. 

Let $I_{M} $ denote an $M$-by-$M$ identity matrix and let $v_{kij}$ denote the numerical approximation of $u_k(x_i,y_j)$, i.e., the kth component of $u$ at $x_i,y_j$.  We introduce:
\begin{align}
\vv &= (v_{111}, ...v_{m11}, v_{121}...v_{m21},...v_{mN1},v_{112}, ...v_{m12},... )^T \nonumber \\
\Pp&=(P_y\otimes P_x\otimes I_m) &\nonumber 
\end{align}
$\Pp$ defines a 2D $l^2$-norm by $\|\vv\|^2=\vv^T \Pp \vv$. Note that $(I_M\otimes D_N\otimes I_m)\vv$ calculates the $x$-derivative approximation in the entire domain. Also, $(I_M\otimes E_0 \otimes I_m)\vv$ extracts the boundary points at $x=0$, i.e., it sets all but the boundary points to 0 in $\vv$. With these observations, we write the scheme as
\begin{align}
\vv_t + (I_M \otimes D_x \otimes A)\vv + (D_M \otimes I_x \otimes B)\vv =\SAT\label{scheme}
\end{align}
where
\begin{align}
\SAT=&-(I_M\otimes P^{-1}_xE_0 \otimes A^+)(\vv-\g)\nonumber \\
&+(I_M\otimes P^{-1}_xE_N \otimes A^-)(\vv-\g)\label{SAT}\\
&-(P_y^{-1}E_0\otimes I_N \otimes B^+)(\vv-\g)\nonumber \\
&+(P_y^{-1}E_M\otimes I_N \otimes B^-)(\vv-\g)\nonumber \\
\end{align}
$\g$ has the same structure as $\vv$ with $g_{W,E,S,N}$ injected on the appropriate boundary positions and it is 0 elsewhere. 

To reduce notation below, we single out one boundary
\begin{align}
\SAT = -(I_M\otimes P^{-1}_xE_0 \otimes A^+)(\vv-\g) +SAT\label{SAT2}
\end{align}
Next, we derive a bound on $\vv$. That is,  we multiply (\ref{scheme}) from the left by $\vv^T\Pp$.
\begin{align}
\vv^T\Pp\vv_t + \vv^T(P_y \otimes Q_x \otimes A)\vv + \vv^T(Q_y \otimes P_x \otimes B)\vv &=\nonumber \\
-\vv^T(P_y\otimes E_0 \otimes A^+)(\vv-\g) + \vv^T\Pp SAT\nonumber 
\end{align}
(Notice that the first term on the right-hand side corresponds exactly to the first in (\ref{weakBC}).)  Adding the transposed equation, and using $Q_N+Q_N^T=-E_0+E_N$,  yields, 
\begin{align}
(\vv^T\Pp\vv)_t + \vv^T(P_y \otimes -E_0+E_N \otimes A)\vv + \vv^T(-E_0+E_M \otimes I_N \otimes B)\vv &=\nonumber \\
-2\vv^T(P_y\otimes E_0 \otimes A^+)(\vv-\g) + 2\vv^T\Pp SAT\nonumber 
\end{align}
To reduce notation, we set all boundary terms except those at $x=0$ to 0. (The others are dealt with in the same way.) We get,
\begin{align}
\|\vv\|^2_t &=  \vv^T(P_y \otimes E_0 \otimes (A^++A^-)\vv -2\vv^T(P_y\otimes E_0 \otimes A^+)(\vv-\g) \nonumber \\
\|\vv\|^2_t &\leq -\vv^T(P_y \otimes E_0 \otimes A^+)\vv +2\vv(P_y\otimes E_0 \otimes A^+)\g \nonumber 
\end{align}
Denote $M=(P_y \otimes E_0 \otimes A^+)$ and note that the matrix is bounded and positive semi-definite. We obtain
\begin{align}
\|\vv\|^2_t \leq -\vv^TM\vv +2\vv M \g = \g^T M \g - (v-\g)^T M (v-\g), \label{disc_est}
\end{align}
which results in a bounded growth and hence the scheme is stable. The right-hand side (\ref{disc_est}) is a direct analog of (\ref{contest}). The difference here is that $\vv$ is not necessarily equal to $\g$ and hence a small dissipation is added at the boundary. 

The SAT (\ref{SAT2}) could be stated more generally as,
\begin{align}
\SAT=-(I_y\otimes P^{-1}_xE_0 \otimes \tilde A)(\vv-g)+SAT
\end{align}
where we have shown stability for $\tilde A=A^+$ which represents a far-field boundary. By choosing $\tilde A$ differently, we may enforce a no-penetration condition and prove stability in a similar way. Interfaces between different grids also fit in this framework; $\tilde A$ is chosen to obtain an energy estimate and $\g$ would contain data from the neighboring grid block. We stress that an interface is not a boundary in its ordinary meaning. Therefore, there is no constraint with regards to the number of conditions given (although too few, meaning less than in-going characteristics, will lead to an unstable approximation). 

For more information on boundary procedures and generalizations to the Navier-Stokes equations, see \cite {NordstromGong09,SvardNordstrom08,SvardCarpenter07,GongNordstrom11}.

\subsection{Strict stability}\label{sec:strict_stab}

So far we have discussed standard energy estimates which produce $L^2$-bounds on the solution at any final time $T$. Such estimates ensure stability in the sense that on any compact domain (in space and time) the solution converges as the grid is refined. For a particular grid resolution, however, the time evolution of the energy may differ between the numerical and true solution.  In Definition \ref{def:strict_stab}, strict stability was defined to ensure a numerically correct time evolution of the discrete solution; the time evolution of the norm should  converge with $h$ as the grid is refined. Sometimes it is desirable to take this requirement one step further by demanding that $\alpha_{d}=\alpha_{c}-|ch|$ such that the discrete scheme is more dissipative than the PDE itself. For constant coefficient PDEs on a Cartesian domain, this is generally not an issue since strictly stable approximations are usually obtained. For variable coefficient PDEs, this is not the case.

\subsubsection{Splitting}\label{sec:split}

Consider a variable coefficient scalar PDE 
\begin{align}
u_t+ (a(x)u)_x = 0, \quad 0<x<1\label{non-split}
\end{align}
augmented with appropriate boundary and initial conditions. We assume that $a(x)$ is smooth such that $\max_x|a_x|$ is bounded.  For smooth solutions we can rewrite (\ref{non-split}) on skew-symmetric form
\begin{align}
u_t+ \frac{1}{2}(au)_x + \frac{1}{2}au_x +\frac{1}{2}a_xu = 0.\label{split_eqn}
\end{align}
The equations (\ref{non-split}) and (\ref{split_eqn}) are of course equivalent. We assume $a(x)>0$ and give a boundary condition to the left $u(x_L,t)=0$. Applying the energy method results in 
\begin{align}
\|u\|_t^2=-a(x_R)u_R^2-\int_{x_L}^{x_R}a_xu^2 \,dx.\nonumber 
\end{align}
The last term need not be negative since we have not assumed anything about the sign of $a_x$. Hence, the energy may grow. However, it is bounded since, 
\begin{align}
\|u\|^2_t\leq \max_x(|a_x|)\|u\|^2.\label{growth2}
\end{align}
We now have two choices of discretizations. The first is
\begin{align}
u_t+P^{-1}Q(Au)=\SAT.\nonumber 
\end{align}
We omit further details and conclude that it is possible to prove stability by obtaining an energy estimate. (See \cite{MishraSvard10} for estimates of variable coefficient problems.)  If, on the other hand, we choose the scheme
\begin{align}
u_t+ \frac{1}{2}D(Au) + \frac{1}{2}ADu +\frac{1}{2}A_xu = 0 \nonumber 
\end{align}
where the entries of the matrix $A_x$ are computed using $P^{-1}Q{\bf a}$ ($[{\bf a}]_j=a(x_j)$), we obtain the more accurate growth rate corresponding  to (\ref{growth2}). For more details on splitted schemes and strict stability,  see \cite{Nordstrom06,Nordstrom07}. Computational results demonstrating the effect of splitting and diagonal norms ($P$ diagonal) are found in \cite{KozdonDunham12(2), KozdonDunham12} where the linear elastic wave equation is combined with nonlinear interface conditions (friction laws for faults)  in earthquake modeling.

Similar splitting techniques can be exploited to obtain energy-style estimates for non-linear problems. Such splittings are generalizations of the well-known "1/3''-trick for Burgers' equation, $u_t+(u^2/2)_x=0$. For smooth solutions this can be written as
\begin{align}
u_t+\frac{1}{3}u^2_x+\frac{1}{3}uu_x=0,\nonumber
\end{align}
for which an estimate of $\|u\|^2$ is readily obtained. The key is homogeneity of the flux as observed in \cite{OlssonOliger94}, where estimates for the Euler equations were derived. This was further developed in \cite{GerritsenOlsson96} and \cite{SvardMishra12}. A flux-splitting technique was also analyzed as a mean to stabilize non-linear equations in \cite{FisherCarpenter12}.

\subsubsection{Coordinate transformations}\label{sec:coord}

Next, we will discuss the effect of coordinate transformations on the time evolution of the semi-discrete problem. Consider
\begin{align}
u_t+u_x=0, \quad u(0,t)=0, \quad 0\leq x \leq 1\label{advec_stretch}
\end{align}
The energy method gives the following estimate: $\|u\|^2=-u^2(1,t)$. The decay of the energy is precisely the energy disappearing through the right outflow boundary. If we discretize this problem with a constant grid size $h$, we get
\begin{align}
u_t+P^{-1}Qu = -\frac{1}{2}[P^{-1}]_0(u_0-0)\nonumber
\end{align}
which results in the estimate $\|u\|_t^2= -u_N^2$, i.e., exactly the same as the continuous estimate. The discretization is strictly stable. 

Next introduce a (non-singular) coordinate transformation, $x=x(\xi)$ (e.g. a stretched grid). Then, we obtain the equivalent equations
\begin{align}
u_t+u_\xi\xi_x=0, & \quad u(0,t)=0,\label{stretch1} \\
\textrm{or,}&\nonumber \\
(\xi_x)^{-1}u_t+u_\xi=0, & \quad u(0,t)=0\label{stretch2} 
\end{align}
The problem (\ref{stretch1}) is now a variable coefficient problem. We may discretize it as 
\begin{align}
u_t+AP^{-1}Qu = -\frac{1}{2}[A_{11}P^{-1}]_0(u_0-0)\label{stretch_disc}
\end{align}
where $A_{ii}=\xi_x(x_i)$. To prove stability, we may freeze the coefficients and the proof is essentially the same as for $u_t+Du=\SAT$ above. However, when freezing the coefficients we neglect terms that have an effect, albeit bounded, on the energy estimate. Hence, this discretization may not be strictly stable.

Another option is to consider (\ref{stretch2}) and note that due to the non-singularity of $x(\xi)$, $A$ is positive definite. From this we may introduce a weighted $L^2$ norm, $\|v\|^2_{\xi}=v^TA^{-1}Pv$ for $P$ diagonal. In this norm, we obtain the growth $(\|v\|^2_{\xi})_t=-u_N^2$ for (\ref{stretch_disc}), which is the same as for (\ref{advec_stretch}) and hence the discretization is strictly stable. It was proven in (\cite{Svard04}) that the condition that $P$ is diagonal can not be circumvented.
 
This implies that while any SBP operator used to compute (\ref{stretch_disc}) is stable, it is only those with $P$ diagonal that can be proven to have the same time evolution as the continuous problem. For instance, if a block-norm SBP operator is used there may be small positive eigenvalues that cause a growth. This has been observed for the Euler equations on curvilinear meshes, where the block-norm operators require somewhat more artificial diffusion to converge to steady-state than their diagonal-norm counterparts, \cite{SvardMattsson05}. A similar phenomena was observed in \cite{KozdonDunham12(2)}.

\subsection{Dual consistency and superconvergence of functionals}\label{sec:dual}
In many cases, accurate solutions to the equations themselves might not be the primary target for a calculation. Typically, functionals computed from the solution, such as the lift and drag coefficients on a body in a fluid, potential or kinetic energy, or probabilities are of equal or even larger interest.  The importance of duality in the context of functionals was realized in adaptive mesh refinement, error analysis and optimal design problems, which has made the study of duality somewhat restricted to unstructured methods such as finite element (FEM), discontinuous Galerkin (DG) and finite volume (FVM) methods. 



Recently, however, it was shown in \cite{HickenZingg11,Hicken12,HickenZingg13} that the adjoint equations can be used for finite difference (FD) methods to raise the order of accuracy of linear functionals of numerical solutions generated by SBP-SAT schemes. In general, the truncation error of SBP-SAT discretization with diagonal norms is of order $2p$ in the interior and $p$ at the boundary, which results in a solution accuracy of $p + 1$. (See \cite{SvardNordstrom06}.) It was shown in \cite{HickenZingg11}, and later extended in \cite{BergNordstrom12},\cite{ZinggHick14}, that linear integral functionals from a diagonal norm dual-consistent SBP-SAT discretization retains the full accuracy of 2p. That is, the accuracy of the functional is higher than the solution itself. This superconvergent behavior was previously observed for FEM and DG methods but it had not been previously proven for finite difference schemes. 

It is important to note that dual consistency is a matter of choosing the coefficients in the SATs and it does not increase the computational complexity. Superconvergence of linear integral functionals hence comes for free. Free superconvergence is an attractive property of a dual consistent SBP-SAT discretization but the duality concept can also be used to construct new boundary conditions for the continuous problem. Research in this direction was initiated in \cite{BergNordstrom13}, and the procedure is under development for both the Euler and Navier-Stokes equations \cite{BNO14}.

\subsection{SBP-SAT in time}\label{sec:SBP-TIME}

In \cite{NordLun13} the SBP-SAT technique in space was extended to the time-domain. To present the main features of the methodology, we consider the simplest possible first order initial-value problem
\begin{equation}
u_t  =\lambda u,  \label{cont}
\end{equation}
with initial condition $u(0)=f$ and $0 \leq t \leq T$. Let $\lambda$ be a scalar complex constant representing an energy stable spatial
discretization of an IBVP. Hence, we assume that $Re(\lambda)<0$.
\begin{remark}
Note that the eigenvalues of difference approximations may be complex motivating our choice of $\lambda$. Furthermore, the size of the maximal eigenvalue of a difference approximation of a hyperbolic problem grows as $1/h$ and for a parabolic problem as $1/h^2$.
\end{remark}

The energy method (multiplying with the complex conjugated solution and integrating over
the domain) applied to (\ref{cont}) yields
\begin{equation}
|u(T)|^2-2 Re(\lambda)||u||^2=|f|^2,
\label{contest1}
\end{equation}
where $||u||^2=\int_0^T |u|^2dt$.  Since $Re(\lambda)<0$, the solution at the final time is bounded in terms of the initial data and we also obtain a bound on the (temporal) norm of the solution.

An SBP-SAT approximation of (\ref{cont}) reads
\begin{equation}
P^{-1}Q \vec{U}  =\lambda \vec{U} + P^{-1}(\sigma (U_0-f)) \vec{e_0}. \label{disc}
\end{equation}
The vector $\vec{U}$ contains the numerical approximation of $u$ at
all grid points in time. The matrices $P,Q$ form the differentiation
matrix with the standard SBP properties given in Definition \ref{def_sbp}
The penalty term on the
right-hand-side of (\ref{disc}) enforces the initial condition weakly
using the SAT technique at grid point zero using the unit
vector $\vec{e_0}=(1,0,...,0,0)^T$. 
\begin{remark}
The penalty term in (\ref{disc}) forces the discrete solution towards the initial data, i.e. $U_0 \neq f$ in general, but it is close. 
\end{remark}

The discrete energy method applied to (\ref{disc}) (multiplying from
the left with $\vec{U}^*P$, using the SBP properties and making the choice $\sigma=-1$ leads to
\begin{equation}
|\vec{U}_N|^2-2 Re(\lambda)||\vec{U}||^2_P=|f|^2-|U_0-f|^2.
\label{discest}
\end{equation}
The choice $\sigma=-1$ also makes the discretization dual consistent \cite{HickenZingg11},\cite{BergNordstrom12}, \cite{BergNordstrom13},\cite{ZinggHick14}. By comparing the continuous
estimate (\ref{contest1}) with (\ref{discest}) we see that the discrete
bound is slightly more strict than the continuous counterpart due to
the term $-|U_0-f|^2$ (which goes to zero with increasing accuracy).
\begin{remark}
Note that the estimate (\ref{discest}) is
  independent of the size of the time-step, i.e. the method is
  unconditionally stable.
\end{remark}
\begin{remark}
Sharp estimates like (\ref{discest}) can hardly
  be obtained using conventional local methods where only a few time
  levels are involved. One can argue, although no proof exist, that it
  can be done only with global methods.
\end{remark}

In \cite{NordLun13} it is shown that the new SBP-SAT method in time is high order accurate, unconditionally stable and together with energy stable semi-discrete approximations, it generates optimal fully discrete energy estimates.  In particular, for energy stable multi-dimensional system problems such as the Maxwells equations, the elastic wave equations and the linearized Euler and Navier-Stokes, fully discrete energy estimates for high order approximations can be obtained in an almost automatic way. 

In \cite{LunNord13}, it was shown how the SBP-SAT technique for time integration, originally formulated as a global method, can be used with flexibility as a one-step multi-stage method with a variable number of stages proportional to the order of the scheme, without loss of accuracy compared to the global formulation. This fact makes the SBP-SAT method highly competive, easier to program and significantly reduce the storage requirements. Classical stability results, including A- stability, L-stability and B-stability could also be proven using the energy method. In addition is was shown
that non-linear stability holds for diagonal norm operators when applied to energy stable initial value problems.

\section{SBP and other numerical schemes}\label{sec:other_schemes}

In this section, we will digress from the high-order finite difference scheme discussed so far. As stated in Definition \ref{def_sbp}, the SBP property is not tied to difference schemes but to the properties of certain matrices. A numerical derivative resulting from any numerical discretization technique can be expressed on matrix form and hence, it may or may not have the SBP property. If it has, we can employ the SAT framework to enforce boundary conditions and effectively reuse all the stability theory stemming from the research on high-order SBP finite difference schemes. 

\subsection{Finite volume schemes}
In \cite{NordstromForsberg03} it was shown that the unstructured node-centered finite volume first-derivative approximation is on SBP form. In \cite{NordstromBjorck01} also the structured cell-centered finite volume method is modified and extended to SBP form.  In \cite{SvardNordstrom04} it was shown that a common Laplacian approximation is also on SBP form and SBP consistent artificial diffusion was proposed in \cite{SvardGong06}. The entire framework for imposing boundary conditions with SAT terms was successfully used in \cite{ShoeybiSvard10} for finite volume schemes. The weak enforcement of boundary conditions created the possibility of hybrid flow solvers where finite volume and finite difference schemes are used in different domains. This greatly simplifies grid generation since complicated geometries can be wrapped in unstructured grids and in the bulk of the domain structured grids and the more accurate and efficient finite difference schemes are used. Hybrid schemes schemes have been developed in \cite{NordstromGong06, GongNordstrom07,NordstromHam09}.

\subsection{The discontinuous Galerkin method}
The SBP-SAT method can also be related the discontinuous Galerkin (dG) method. In fact, one may argue that it is the same methodology, only in a different technical setting. Consider the advection equation (\ref{const_advec}). We expand the solution in a polynomial $u = L^T(x)\,\, \vec{\alpha}(t)$ where $L(x)= (\phi_0, \phi_1,  \, \, ... \,\, \phi_N)^T$ and $ \vec{\alpha}(t) = (\alpha_0, \alpha_1, ... \, \alpha_N)^T$. By inserting $u$ into (\ref{const_advec}), multiplying each row with the basis functions $\phi_i$ and integrating over the domain we find
\begin{equation}\label{eqn: 2}
\int_0^1 L L^T dx\vec{\alpha}_t + a \int_0^1 L L_x^T dx \vec{\alpha} = 0, \quad \Rightarrow \quad P\vec{\alpha}_t + a Q \vec{\alpha} = 0.
\end{equation}
The matrices $P$ and $Q$ satisfy the SBP requirements in Definition \ref{def_sbp} if one chooses $\phi_i$ to be the $ith$ Lagrange polynomial. For more details, see \cite{Nordstrom06}.

How about imposing boundary conditions ?  To see the similarities we replace $Q$ in (\ref{eqn: 2}) by $-Q^T+B$ (integration-by-parts with SBP operators). We obtain the modified equation $P \vec{\alpha}_t+ a B \vec{\alpha} - a Q^T \vec{\alpha} = 0$. Next we use a common dG procedure and replace"what we have" ($\alpha_0$) with "what we want" ($g(t)$) and integrate back by replacing $Q^T$ with $-Q+B$. The final result is
\begin{equation}\label{eqn: 3}
P \vec{\alpha}_t+ a Q \vec{\alpha} = -a(\alpha_0-g(t))\vec e_0
\end{equation}
where $\vec e_0^T=(1,0...0)$. Clearly (\ref{eqn: 3}) includes a weak SAT term for the boundary condition and consequently the dG scheme presented above is on SBP-SAT form. For more details on relations to dG schemes, see \cite{GASS13}. Other striking similarities with SBP-SAT schemes can be found in the so called flux reconstruction schemes, where the penalty terms are constructed using specially designed polynomials. For a description of that method and similar ones, see \cite{JAME13}.

\section{Applications}\label{sec:app}

\subsection{The multi-dimensional Navier-Stokes equations}\label{sec:multiD_NS}

In this section, we will briefly sketch the procedure for the Navier-Stokes equations and we refer to the references for more details.  In general, the domain is not Cartesian. To be able to use finite difference schemes, it must be possible to subdivide the computational domain in several non-overlapping patches that cover the entire domain. In each subpatch, a curvilinear grid is constructed. The grid must be a sufficiently smooth to allow for a smooth transformation to a Cartesian mesh. That is, we assume that there are coordinates $0\leq \xi,\eta \leq 1$ and a transformation $x=x(\xi,\eta), y= y(\xi,\eta)$ to the physical space in each patch. Grid lines should be continuous across an interface but their derivatives need not. We remark that grid lines are allowed to end at an interface, i.e., the resolution of two neighboring blocks may be different, see \cite{MattssonCarpenter10} and also \cite{NissenKreiss12}. 

In each patch, the transformed system takes the form,
\begin{align}
Ju_t +(\tilde A_{1}u)_{\xi} +(\tilde A_{2}u)_{\eta}&=\epsilon(\tilde F^{v\xi}_{\xi}+\tilde F^{v\eta}_{\eta}),\quad 0< \xi,\eta < 1, \quad t>0\label{2D_NS_curv}\\
\tilde F^{v\xi}&=\tilde B_{11}u_{\xi}+\tilde B_{12}u_{\eta}\nonumber \\
\tilde F^{v\eta}&=\tilde B_{21}u_{\xi}+\tilde B_{22}u_{\eta}\nonumber 
\end{align}
augmented with suitable boundary, interface and initial conditions.

The process to obtain an energy estimate begins with a linearization followed by a symmetrization, see \cite{AbarbanelGottlieb81}.  To state the boundary conditions, the matrices, $\tilde A_{1,2}^{\pm}$, are needed. The diagonalization of these matrices were carried out in \cite{PulliamChaussee81}.
Derivation of energy estimates for the Euler and Navier-Stokes equations with various types of boundary and interface are found in \cite{NordstromCarpenter99,NordstromCarpenter01} where the specific difficulties associated with these equations are explained.  Furthermore, in \cite{SvardCarpenter07} far-field boundary conditions are derived and particular attention given to the difficult case of subsonic outflow. In \cite{SvardNordstrom08} solid wall and in \cite{BergNordstrom11} Robin wall boundary conditions are derived.  Also, recall that for curvilinear grids, SBP operators with diagonal $P$, have favorable stability properties as discussed in Section \ref{sec:coord} and \cite{Svard04}. The specific treatment of grid block interfaces is addressed in \cite{NordstromGong09}.

The diffusive terms in the Navier-Stokes equations may be discretized in a few different ways. In (\ref{2D_NS_curv}) above, they are treated as flux terms and hence they are approximated by repeated applications of the first-derivative approximation.  As discussed in section \ref{sec:sec_deriv} it may be advantageous from an accuracy point of view to use dedicated second-derivative approximations that use narrow stencils. To this end, we rewrite (\ref{2D_NS_curv}) as
\begin{align}
Ju_t +(\tilde A_{1}u)_{\xi} +(\tilde A_{2}u)_{\eta}&=\nonumber 
\epsilon(\tilde B_{11}u_{\xi\xi}+\tilde B_{12}u_{\eta\xi}+\tilde B_{21}u_{\xi\eta}+\tilde B_{22}u_{\eta\eta}).&\nonumber 
\end{align}
This derivation assumes that the matrices $\tilde B_{ij}$ are constant. We can then use the second-derivative operator given by $D_2=P^{-1}(-S^TMS+BS)$ on $u_{\xi\xi}$ and $u_{\eta\eta}$. (Recall that $BS$ is a matrix with the first and last rows non-zero and approximating the first-derivative.) The cross-derivative terms are still approximated with the first-derivative, $P^{-1}Q$, and we note that such an approximation is not "wide'' since they operate in different directions. To obtain an energy estimate, the $D_2$ and $D_1$ operators must relate in a certain way. Roughly speaking, the $D_2$ operator must be more diffusive than $D_1D_1$. Such $D_2$ operators are termed "compatible'' and can be found in \cite{MattssonSvard08}. In the case when the diffusion coefficient is not constant, $D_2$ must be a function of the diffusion coefficient. (C.f. $(\epsilon u_x)_x\approx (\epsilon_{i+1/2}(u_{i+1}-u_i)-\epsilon_{i-1/2}(u_i-u_{i-1}))/h^2$ in the interior of the scalar case with second-order approximation.) SBP versions of such approximations are found in \cite{Mattsson11}.
\begin{remark}
It should be mentioned here that the use of compact second derivatives complicates the programming since one have to keep track on whether one deals with a clean second derivative or a mixed types of derivative. Next one have to build the flux with these different components.  With the use of only first derivative operators, no such bookkeeping is necessary, one simply build the complete flux, and differentiate.
\end{remark}

Many times it is necessary to damp small oscillations. This is usually done by adding artificial dissipation to the scheme. For high-order schemes such diffusion terms are even order differences scaled such that they do not to reduce the order of accuracy of the scheme and maintain a grid independent damping on the highest modes. For instance, a fourth-order undivided second derivative is appropriate to add to a 4th-order accurate scheme. However, the artificial diffusion terms must be augmented with boundary closures that preserve the energy estimates. Without appropriate boundary closures, the artificial dissipation terms may destabilize the scheme. SBP conforming artificial diffusion operators can be found in \cite{MattssonSvard04}.


\subsubsection{Other time integration techniques and computational aspects}
\label{sec:compaspects}

To develop a code using the SBP-SAT schemes necessitates a number of choices apart from the spatial discretization scheme. For time-dependent problems, a time-discretization scheme must be chosen. In many of the computational results obtained in the references listed here a low-storage 3rd-order Runge-Kutta method has been used, \cite{KennedyCarpenter00}. However, the time-step constraint for an explicit method is often severe. If the stiffness is artificial, i.e., not reflecting time-scales in the actual solution, an implicit scheme may be more effective. We mention implicit-explicit Additive Runge-Kutta methods, \cite{KennedyCarpenter03},  as a viable option where the cost of implicit schemes may be kept to a minimum by using implicit schemes in  stiff regions only. An effort in this direction is found in \cite{ShoeybiSvard10}  in the context of SBP finite-volume schemes. (See also \cite{BijlCarpenter02,CarpenterKennedy05}.)

As a step towards effective implicit schemes, efficient steady-state solvers have been developed. While there are well-known and reliable libraries available for both linear and non-linear solvers there are still obstacles to overcome to use them in practice. One is to obtain the Jacobian matrix of the scheme. Already for second-order schemes it is cumbersome to derive and program exact Jacobians. For high-order methods even more so. Another option is approximate the Jacobian numerically. However, as the accuracy is increased by reducing the step size in the difference procedure, cancellation errors increase. To find the optimal value is not easy and the optimal value may not be accurate enough for fast convergence to steady state. In \cite{WeideSvard12}, dual numbers are used (see  \cite{FikeJongsma11}) to approximate the Jacobian numerically. This is an algebraic trick that rids the problem with cancellation and fast convergence is obtained.

Finally, we return to the spatial discretizations, which naturally has an effect on the convergence rate to steady state. It has already been mentioned that the use of diagonal-norm schemes ($P$ diagonal) increases the convergence speed and allows for less diffusion in the simulation, \cite{SvardMattsson05}. This has also been noted in \cite{KozdonDunham12(2), KozdonDunham12}, where it is further observed that splitting the convective terms has a favorable impact on convergence speed. Furthermore, the weak imposition of wall boundary conditions, i.e., the SAT procedure, increases the speed to steady state when compared to injecting the boundary conditions at the wall, \cite{NordstromEriksson12}.

\subsubsection{Computational Efficiency}

We have to a great extent discussed stability proofs of SBP-SAT schemes. This is the foundation to ensure robust and highly accurate simulations. Next, we will review the literature regarding the efficiency of SBP schemes. In \cite{SvardMattsson05,MattssonSvard07} steady and unsteady airfoil computations governed by the Euler equations are found. In particular it is demonstrated that low order schemes fail (on reasonable grids) to propagate flow structures over long distances. Airfoil computations (NACA 0012) with Navier-Stokes equations are found in \cite{SvardLundberg10} and a subtle case with $Ma=0.5$ and $Re=5000$ was successfully computed. The solution displays back flow but the wake remain stable. Inaccurate simulations would
produce an unstable wake. More simulations with the Navier-Stokes equations are found in \cite{SvardCarpenter07,SvardNordstrom08} for a flow around a cylinder. The focus in these articles is on the robustness and accuracy of far-field and wall boundary conditions. It is demonstrated that accurate shedding frequencies and separation points are obtained on course grids when the order of accuracy is sufficiently high. Two other evaluations of the SAT technique to enforce boundary conditions are found in \cite{Bodony09,Bodony10}. Here, the focus lies on aeroacoustic applications where minimal reflections from the boundaries are of the essence. As previously mentioned, the high-order SBP-SAT schemes was shown to be very competitive in terms of the use of computer resources, compared with other high-order techniques, see \cite{WeideSvard12}.

In \cite{ShoeybiSvard10}, an unstructured  finite volume SBP scheme was implemented for the Euler and Navier-Stokes equations. Implicit-explicit time integration were employed and the implicit part adaptively associated with stiff regions in the problem. This procedure greatly reduced the computational cost and the efficiency was demonstrated on a number of test cases including LES around a cylinder. 

\subsection{Other applications}
So far, we have mainly discussed SBP-SAT schemes applied to the compressible Navier-Stokes equations. Here, we will briefly summarize other areas where SBP-SAT schemes have been successfully used. In \cite{SvardMishra11}, SBP operators were used to solve the compressible Euler equations with a stiff source term modeling a reaction equation.  Special boundary conditions that automatically produced divergence free solutions were derived and implemented in stable SBP schemes for the incompressible Navier-Stokes equations in \cite{NordstromMattsson07}. The incompressible Navier-Stokes equations were also considered in \cite{HamMattsson07} where an accuracy evaluation of SBP-SAT finite volume schemes was the main objective.

Some applications in aeroacoustics, using the linearized Euler equations, are found in \cite{Muller08,MullerYee02} and with focus on human phonation in \cite{LarssonMuller09}.  Fluid-structure interaction problems were studied in \cite{NordstromEriksson10}, and it was shown that 
the weak coupling in SBP-SAT schemes was the coupling procedure that lead to the best result for this very sensitive problem. In \cite{LindstromNordstrom10} conjugate heat transfer between a solid and a fluid was analyzed using SBP schemes. Moreover, in \cite{NordstromBerg13} the conjugate heat transfer was analyzed further by using the Navier-Stokes equations only (no heat equation was involved) but in combination with a special interface conditions. The resulting well-posed problem was implemented using an SBP-SAT scheme. 

Another area particularly suited for  high-order SBP schemes is wave propagation. In \cite{MattssonNordstrom06,MattssonHam08,MattssonHam09} the wave equation on second-order form was analyzed; boundary conditions as well as interface conditions between media with different propagation speeds were derived and SBP-SAT schemes for solving the problem proposed.

In numerical relativity, the Einstein equations constitute a set of equations similar to the second-order wave equation and much effort has gone into designing effective SBP-SAT schemes. A sample of articles in this field is:  \cite{SchnetterDiener06,DienerDorband07,DorbandBerti06,ZinkSchnetter08,MattssonParisi10}.
Furthermore, the SBP techniques have been applied to Maxwell's equations of electromagnetism (\cite{NordstromGustafsson03}) and the magnetic induction equations (\cite{KoleyMishra09,KoleyMishra12,MishraSvard10}). Recently, they have also been used in wave propagation problems in geophysics \cite{KozdonDunham12(2), KozdonDunham12} and medicine \cite{AmNord13}.

The SBP-SAT methodology has also been used in the emerging area of Uncertainty Quantification (UQ), which combines computational mathematics and mathematical statistics. Unlike the conventional approach using Monte Carlo simulations that require a large number of runs to obtain reliable statistics (and hence calls for extremely fast solvers), UQ is designed to solve directly for the solution and its associated statistics. Hence, UQ is potentially very effective and SBP-SAT schemes produce accurate numerical solutions to the resulting PDE system.  Problems in UQ solved the SBP-SAT technique are found in \cite{PettIaccNord09,PettNordIacc10,PettDooNord13,PettIaccNord13,PIN14}.


\section{Non-linear stability}\label{sec:non_lin}

The SBP-SAT schemes are often used to approximate non-linear PDEs, such as the Euler and Navier-Stokes equations. Before discussing non-linear stability, we will make a brief justification for the linear stability proofs obtained with SBP-SAT schemes. The Navier-Stokes equations can be linearized around a smooth solution (assuming it exists) which gives a variable coefficient linear problem in the perturbed variables. If the solution of this problem is proven to be bounded then the non-linear problem is stable with respect to small perturbations. A similar argument is then made for a numerical discretization of the non-linear equations, \cite{Strang64}. The simplification process is often taken one step further and the "constant coefficient'' problem is analyzed. This means that the variable coefficients in the PDE  are "frozen'' prior to the analysis. This approach is justifiable, at least if the energy method is used to prove stability, see \cite{KreissLorenz, MishraSvard10}. (If other techniques are used to prove stability the situation is much more subtle, see \cite{Michelson83,Michelson87,Wade90,StrikwerdaWade88}). This line of argument justifies the convergence observed in subsonic regimes where non-linear effects are negligible. Mild non-linear effects can be kept at bay by the use of high-order diffusion terms, \cite{MattssonSvard04,DienerDorband07}.


Strong non-linear effects in fluid mechanics are often associated with shocks and the generation of non-smooth solutions for which the above linear arguments do not hold. However, it is possible to derive $L^2$ bounds without prior linearization. Unfortunately, an $L^2$ bound is not enough to imply convergence for a non-linear PDE. Nevertheless, stability of some sort is required and many different paths have been taken. 

A popular choice of finite difference schemes are WENO type schemes, originally derived in \cite{JiangShu96}, that produce approximate solutions with sharp shock resolution and formally high order of accuracy. However, even linear stability without boundary conditions for such schemes are a subtle matter, \cite{WangSpiteri07,MotamedMacdonald11}. In two recent papers, the concepts of WENO and SBP were merged yielding linearly stable schemes for initial-boundary-value problems,   \cite{YamaleevCarpenter09,FisherCarpenter11}.

Another approach was taken in \cite{GerritsenOlsson96,GerritsenOlsson98} were they used a splitting of the flux function based on entropy arguments (see \cite{OlssonOliger94}). Under certain assumptions, including a bounded wave speed, non-linear energy estimates were obtained. The drawback of this approach is that it usually introduces a non-conservative term which may cause an incorrect shock location. However, for scalar problems it was shown in \cite{FisherCarpenter12} that most splitted schemes are in fact conservative, and hence the non-conservative terms will not cause a drift in the shock location. 


A successful way to prove non-linear stability for conservation laws is reviewed in \cite{Tadmor03}. The idea is to build a scheme that prevents the entropy from growing unboundedly and consequently they are termed \emph{entropy stable}. The theory was developed for Cauchy problems and a global entropy estimate obtained using summation-by-parts (without boundaries). In \cite{SvardMishra12}, second-order SBP schemes were put in the entropy stability framework and non-linear stability proven for initial-boundary-value conservation laws, including the Euler equations. The theory was extended to 3rd-order accurate SBP schemes in \cite{Svard12}. Non-linearly entropy stable boundary conditions were revisited in \cite{SvardOzcan13} and here stability proofs for far-field, solid wall and interfaces were given for the Euler equations. In the WENO framework, entropy stable schemes have been derived in \cite{FisherCarpenter13} with SBP boundary closures such that the boundary conditions in \cite{SvardOzcan13,SvardMishra12} are readily applicable.

\section{Summary and Outlook}

SBP-SAT schemes have reached a mature state where it is relatively straightforward to apply the method to new problems, as demonstrated by the vastly different applications addressed to date. The mathematical foundation of SBP-SAT schemes, with the most important property being that convergence can be proved for linear or linearized problems, gives credibility to the numerical simulations that ad hoc methods can not offer. The possibility to evaluate the size of numerical errors should be of paramount importance in both engineering and scientific simulations.

The key to the stability proofs is of course the boundary treatments of the SBP-SAT schemes. This is the feature that also makes the methodology versatile. It allows for stable and accurate couplings with other type of methods (hybrid schemes) and it is also the key to connect different models in a stable manner. (E.g. fluid-structure interaction.)

The remarks above outlines three paths for future research:
\begin{itemize}
\item Applying SBP-SAT to new problems which to a large extent involves analysis of boundary conditions.
\item Connecting different models which calls for analysis of interface conditions and the subsequent analysis to derive stable SAT connections
\item Derivation of new hybrid schemes like FD-DG which would increase the accuracy in the unstructured part.
\item Combining the SBP-SAT technique in space and time to obtain stable fully discrete approximations of IBVP.
\end{itemize}
We also mention the extension of SBP-SAT schemes to non-linear conservation laws (in the non-linear regime).  The development of non-linear stability and convergence theory is of course  hampered by the lack of mathematical knowledge. Existence results which are crucial when proving convergence are scarce for non-linear PDEs. However, the SBP-SAT schemes have many important properties that make them promising candidates in the developlment of non-linear stability theory and some steps have been taken in this direction.

\bibliographystyle{alpha}


\newcommand{\etalchar}[1]{$^{#1}$}

\end{document}